\def \Q {{\mathbb{Q}}}
\def \N {{\mathbb{N}}}
\def \Z {{\mathbb{Z}}}
\def \P {{\mathcal{P}}}
\newtheorem*{theorem*}{Theorem}
\newtheorem{theorem}{Theorem}
\newtheorem{cor}[theorem]{Corollary}
\newtheorem{lemma}[theorem]{Lemma}
\title{Irreducibility of $x^n-a$}
\author {Biswajit Koley,  A.Satyanarayana Reddy\thanks{The research of this author is supported by Matrics MTR/2019/001206  of SERB, India.} \\
Department of 
Mathematics, Shiv Nadar 
University, India-201314\\ (e-mail: 
bk140@snu.edu.in, satyanarayana.reddy@snu.edu.in).
  }
\date{}
\begin{document}
\maketitle
\begin{abstract}
A. Capelli gave a necessary and sufficient condition for the reducibility of $x^n-a$ over $\Q$. In this article, we are providing  an alternate elementary proof for the same. 
\end{abstract}
{\bf{Key Words}}: Irreducible polynomials, cyclotomic polynomials.\\
{\bf{AMS(2010)}}: 11R09, 12D05.\\

In this article, we present an elementary proof of a theorem about the irreducibility of $x^n-a$ over $\Q$. Vahlen\cite{vahlen} is the first mathematician who characterized the irreducibility conditions of $x^n-a$ over $\Q$. A. Capelli~\cite{capelli} extended this result to all fields of characteristic zero.  Later L. R\'edei~\cite{redei} proved this result  for  all fields of positive characteristic. But this theorem referred to as Capelli's theorem.
 
 \begin{theorem}[\cite{capelli}, \cite{vahlen}, \cite{redei}] \label{thm:vahlen}
 Let  $n\ge 2.$ A polynomial $x^n-a\in \Q[x]$ is reducible over $\Q$ if and only if 
 either $a=b^t$ for some $t|n, t>1$, or $4|n$ and $a=-4b^4$, for some $b\in \Q.$
 \end{theorem}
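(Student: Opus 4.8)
\emph{The plan} is to prove both directions, the ``if'' part by explicit factorizations and the ``only if'' part by a degree computation that, after reducing to prime-power exponents, pins the exceptional family $-4b^4$ on the prime $2$; one may assume $a\neq0$. For ``if'': if $a=b^t$ with $t\mid n$, $t>1$, and $n=tm$, then $x^m-b$ is a proper divisor of $x^n-a=(x^m)^t-b^t$; if $4\mid n$, $a=-4b^4$, and $n=4m$, then the Sophie Germain identity $u^4+4v^4=(u^2+2uv+2v^2)(u^2-2uv+2v^2)$ with $u=x^m$, $v=b$ exhibits $x^n-a=x^{4m}+4b^4$ as a product of two factors of degree $2m$.

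For ``only if'' I would first establish a base case: over any characteristic-$0$ field $F$, $x^p-c$ (with $p$ prime, $c\in F^{\times}$) is irreducible iff $c\notin F^p$. Fixing $\gamma\in\overline F$ with $\gamma^p=c$, the roots are the $\zeta_p^{\,j}\gamma$, so the constant term of a monic irreducible factor of degree $d\in\{1,\dots,p-1\}$ is $\pm\zeta_p^{\,m}\gamma^d\in F$; raising to the $p$-th power gives $c^d\in F^p$, and since $\gcd(d,p)=1$, B\'ezout upgrades this to $c\in F^p$. Next I would reduce to prime powers: when $n=n_1n_2$, a factorization of $x^{n_1}-a$ lifts to one of $x^n-a$ via $x\mapsto x^{n_2}$; and if $\gcd(n_1,n_2)=1$ with both $x^{n_1}-a$, $x^{n_2}-a$ irreducible then, with $\alpha^n=a$, the subfields $\Q(\alpha^{n_2})$, $\Q(\alpha^{n_1})$ of $\Q(\alpha)$ have degrees $n_1$, $n_2$, so $n=\operatorname{lcm}(n_1,n_2)$ divides $[\Q(\alpha):\Q]\le n$, forcing irreducibility of $x^n-a$. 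Iterating, $x^n-a$ is irreducible over $\Q$ iff $x^{p^v}-a$ is for every prime power $p^v\parallel n$.

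I would then handle prime powers. For odd $p$, induct on $k$ over all characteristic-$0$ fields $K$: if $a\notin K^p$, put $E=K(a^{1/p})$, of degree $p$ by the base case and with $N_{E/K}(a^{1/p})=a$ (its minimal polynomial being $x^p-a$); then $a^{1/p}\notin E^p$, since otherwise $a^{1/p}=\eta^p$ would give $a=N_{E/K}(a^{1/p})=N_{E/K}(\eta)^p\in K^p$, so the inductive hypothesis over $E$ makes $x^{p^{k-1}}-a^{1/p}$ irreducible and $[K(a^{1/p^k}):K]=p\cdot p^{k-1}=p^k$. For $p=2$, induct on $k$ with the claim that $x^{2^k}-a$ is irreducible over $K$ iff $a\notin K^2$ and ($k=1$ or $a\notin-4K^4$); for $k\ge2$ set $F=K(\sqrt a)$, the key being the elementary equivalence
\[\sqrt a\in F^2\iff -\sqrt a\in F^2\iff a\in-4K^4,\]
proved by expanding $\pm\sqrt a=(u+v\sqrt a)^2$ with $u,v\in K$, which forces $2uv=\pm1$ and $u^2+av^2=0$, hence $a=-1/(4v^4)=-4\bigl(\tfrac1{2v}\bigr)^4$ (and conversely $b+\tfrac1{2b}\sqrt a$ squares to $\sqrt a$ when $a=-4b^4$). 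Then $a\notin-4K^4$ gives $\sqrt a\notin F^2$ and $\sqrt a\notin-4F^4$ (otherwise $-\sqrt a=(2\theta^2)^2\in F^2$), so the inductive hypothesis over $F$ applies to $x^{2^{k-1}}-\sqrt a$, giving $[K(a^{1/2^k}):K]=2\cdot2^{k-1}=2^k$.

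Finally I would assemble: $x^n-a$ is reducible over $\Q$ iff some $x^{p^v}-a$ ($p^v\parallel n$) is reducible, i.e.\ iff $a\in\Q^p$ for some prime $p\mid n$, or $4\mid n$ and $a\in-4\Q^4$; and ``$a\in\Q^p$ for some prime $p\mid n$'' is equivalent to ``$a=b^t$ for some $t\mid n$, $t>1$'' (take $t=p$; conversely, if $a=b^t$ then $a=(b^{t/q})^q$ for a prime $q\mid t$ and $q\mid n$), which is exactly the stated condition. I expect the prime-$2$ step to be the main obstacle: the family $-4b^4$ appears precisely because $\sqrt a$ may become a square in $K(\sqrt a)$, and one must verify both that this is controlled by the single condition $a\in-4K^4$ and that the condition is inherited by the tower, so that the induction closes; the odd-prime and prime-power-reduction steps are routine degree arguments by comparison.
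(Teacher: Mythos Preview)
Your argument is correct, but it takes a genuinely different route from the paper's. You run the classical field-theoretic proof: reduce to prime-power exponents via a coprime degree argument, handle odd primes by the norm trick $N_{E/K}(a^{1/p})=a$, and isolate the exceptional family at $p=2$ by analyzing when $\sqrt a$ becomes a square in $K(\sqrt a)$. The paper, by contrast, deliberately avoids field extensions altogether: it first passes from $\Q$ to $\Z$, then shows (Lemma~\ref{lem:gcdge2}) by pure arithmetic on exponents---comparing $d^n=a^s$ where $d$ is the constant term of a proper factor---that reducibility of $x^n\pm a$ forces $a=b^m$ with $m\mid n$, $m\ge2$; the remaining case $x^n+b^{2^r}$ is settled via Selmer's observation that if $g$ is irreducible and $g(x^2)$ reducible then $g(x^2)$ is a difference of two squares of a very constrained shape, together with a $\bmod\ 4$ obstruction. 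Your approach is more conceptual and immediately generalizes to arbitrary characteristic-$0$ fields (indeed, with minor changes, to all fields), which is exactly the machinery the paper set out to bypass; the paper's approach buys a proof accessible without any Galois or extension-degree arguments, at the cost of being specific to $\Z$ and somewhat more combinatorial in the endgame.
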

 
Since Theorem \ref{thm:vahlen} is true for arbitrary fields, all of the proofs are proved  by using field extensions except the proof given by Vahlen \cite{vahlen}. Vahlen assumes that the binomial $x^n-a$  is reducible and proves Theorem \ref{thm:vahlen} by using the properties of $n^{th}$ roots of unity and by comparing the coefficients on both sides of the following 
equation
$$x^n-a=(x^m+a_{m-1}x^{m-1}+\cdots+a_0)(x^{n-m}+b_{n-m-1}x^{n-m-1}+\cdots+b_0)$$ for some $m$, $0<m<n$.   Reader can consult (\cite{karpilovsky}, p.425) for a proof using field theory. We give a proof particularly over $\Q$ by using very little machinery. 

Let $f(x)=x^n-a$, $a=\frac{b}{c}\in \Q$ and $(b, c)=1$. Then $c^nf(x)=(cx)^n-c^{n-1}b\in \Z[x].$ Hence $x^n-a$ is reducible over $\Q$ if and only if $y^n-c^{n-1}b$ is reducible over $\Z$. It is, therefore, sufficient to consider $a\in \Z$ and throughout the article, by reducibility, we will mean reducible over $\Z$.

\begin{theorem}\label{thm:capelli}
 Let  $n\ge 2.$ A polynomial $x^n-a\in \Z[x]$ is reducible over $\Z$ if and only if 
 either $a=b^t$ for some $t|n, t>1$, or $4|n$ and $a=-4b^4$, for some $b\in \Z.$ 
\end{theorem}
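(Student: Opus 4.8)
\medskip
\noindent\textbf{Sketch of a proof.} The plan is to treat the ``if'' direction by displaying factorizations and the ``only if'' direction by reducing to prime-power exponents and then inspecting the constant term of a proper factor. For ``if'': if $a=b^{t}$ with $t\mid n$, $t>1$, then $y-b\mid y^{t}-b^{t}$ yields $x^{n/t}-b\mid x^{n}-a$ with $1\le n/t<n$ (the case $a=0$ being trivial); if $4\mid n$ and $a=-4b^{4}$, then the identity $u^{4}+4b^{4}=(u^{2}-2bu+2b^{2})(u^{2}+2bu+2b^{2})$ with $u=x^{n/4}$ writes $x^{n}+4b^{4}$ as a product of two polynomials in $\Z[x]$ of degree $n/2\ge 2$. (Cyclotomic polynomials also give the finer factorization $x^{n}-b^{t}=\prod_{d\mid t}b^{\phi(d)}\Phi_{d}(x^{n/t}/b)$, but this is not needed here.)

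For ``only if'', suppose $x^{n}-a$ is reducible and $a\ne0$. First I would reduce to $n=p^{r}$ a prime power: fixing a complex root $a^{1/n}$, each degree $[\Q(a^{1/p^{v_{p}(n)}}):\Q]$ is a power of $p$ (climb $\Q\subseteq\Q(a^{1/p})\subseteq\Q(a^{1/p^{2}})\subseteq\cdots$, each step adjoining a $p$-th root and hence of degree $1$ or $p$), so these degrees are pairwise coprime, and since the compositum of the $\Q(a^{1/p^{v_{p}(n)}})$ is $\Q(a^{1/n})$ we get $[\Q(a^{1/n}):\Q]=\prod_{p\mid n}[\Q(a^{1/p^{v_{p}(n)}}):\Q]$; as $x^{m}-a$ is reducible exactly when $[\Q(a^{1/m}):\Q]<m$, reducibility of $x^{n}-a$ forces that of $x^{p^{r}}-a$ for some $p^{r}\,\|\,n$. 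Now assume $n=p^{r}$, pick a monic $g\in\Z[x]$ with $g\mid x^{p^{r}}-a$ and $0<\deg g=m<p^{r}$, a complex root $\alpha$ of $x^{p^{r}}-a$, and $\omega=e^{2\pi i/p^{r}}$; the roots being $\omega^{k}\alpha$, we have $(-1)^{m}g(0)=\omega^{K}\alpha^{m}\in\Z$ for some integer $K$. With $u,v\in\Z$ such that $um+vp^{r}=\gcd(m,p^{r})=p^{c}$ (note $c\le r-1$) and $\rho:=\bigl((-1)^{m}g(0)\bigr)^{u}a^{v}=\omega^{Ku}\alpha^{p^{c}}\in\Q$, the identity $\rho^{p^{r-c}}=\omega^{Kup^{r-c}}a$ relates two nonzero real numbers, so $\omega^{Kup^{r-c}}=\pm1$ and $\rho^{p^{j}}=\pm a$ with $j=r-c\ge1$, where $\rho\in\Z$ by Gauss's lemma. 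If $p$ is odd, or the sign is $+$, then (absorbing the sign) $a$ is a $p$-th power with $p\mid n$ and we are done. The only remaining case is $p=2$ with $a=-\rho^{2^{j}}=-b^{2}$, where $b=\rho^{2^{j-1}}\in\Z$ and, since $a\ne0$, we may take $b\ge1$.

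It remains to handle $a=-b^{2}$. Since $x^{2}+b^{2}$ is irreducible, $r\ge2$, so $4\mid n$. In the tower $\Q\subseteq\Q(a^{1/2})\subseteq\Q(a^{1/4})\subseteq\cdots\subseteq\Q(a^{1/2^{r}})$ some step has degree $1$; I claim the first such is already $\Q(a^{1/2})\subseteq\Q(a^{1/4})$. If not, let the first collapse be $\Q(\beta')\subseteq\Q(\beta)$ with $\beta=a^{1/2^{k}}$, $\beta'=\beta^{2}$, $k\ge2$; then $[\Q(\beta):\Q(\beta')]=2$, so $\{1,\beta\}$ is a $\Q(\beta')$-basis, and writing $\beta=\gamma^{2}$ with $\gamma=u+v\beta$ ($u,v\in\Q(\beta')$) forces $2uv=1$, $u^{2}+v^{2}\beta'=0$, hence $\beta'=-4u^{4}=(2u^{2}i)^{2}$. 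But $\Q(\beta')\supseteq\Q(a^{1/2})=\Q(i)$ (as $(a^{1/2})^{2}=-b^{2}$), so $2u^{2}i\in\Q(\beta')$ and therefore $\beta\in\Q(\beta')$, contradicting $[\Q(\beta):\Q(\beta')]=2$. Hence $a^{1/2}$ is a square in $\Q(i)$: writing $a^{1/2}=(u+vi)^{2}$ with $u,v\in\Q$ gives $u^{2}=v^{2}$ and $2|uv|=b$, so $u^{2}=b/2$; a short denominator check then shows $u\in\Z$, whence $b=2u^{2}$ and $a=-b^{2}=-4u^{4}$, as required.

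The mechanical steps are the factorizations and the uses of Gauss's lemma; I expect the real obstacle to be the final case $a=-b^{2}$. The constant-term computation is blind to the factor $4$, so one has to inject the structural fact that the pertinent quadratic subfield is $\Q(i)$ and that $-4u^{4}$ is a square there. The truly delicate point is the claim that no later step of the radical tower can be the first to collapse; this is what pins the exceptional shape down to $a=-4b^{4}$ with $4\mid n$, and it is here that the prime $2$, via $\sqrt{-1}$, is indispensable. (One can instead avoid field extensions entirely and, following Vahlen's method, deduce everything by comparing the coefficients on the two sides of $x^{n}-a=(x^{m}+\cdots+a_{0})(x^{n-m}+\cdots+b_{0})$, at the price of a more laborious argument.)
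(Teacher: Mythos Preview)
Your argument is essentially correct, but it follows a route \emph{opposite} to the one in the paper. The paper's entire purpose is to give a proof that avoids field extensions: it shows directly (Lemma~\ref{lem:gcdge2}, by induction on the number of prime divisors of $a$ and manipulation of exponents in the constant term) that reducibility forces $a=b^{m}$ with $m\mid n$, and then isolates the delicate case $x^{n}+b^{2^{r}}$ using Selmer's device---if $g$ is irreducible and $g(x^{2})$ is reducible then $g(x^{2})=(-1)^{\deg g}g_{1}(x)g_{1}(-x)$, so $g(x^{2})=f_{1}(x)^{2}-f_{2}(x)^{2}$ with $f_{2}(x)=xh(x^{2})$---and a congruence mod $4$. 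By contrast, you reduce to prime-power exponent via degrees of compositums, and you resolve the $p=2$ case by analysing the radical tower $\Q\subset\Q(a^{1/2})\subset\Q(a^{1/4})\subset\cdots$ inside $\Q(i)$. Your method is the classical field-theoretic one (close in spirit to the proof in Karpilovsky that the paper cites), and it generalises readily to other base fields; the paper's method is more elementary, needs nothing beyond unique factorisation in $\Z[x]$, and makes the role of the identity $x^{4}+4b^{4}=(x^{2}+2b^{2})^{2}-(2bx)^{2}$ transparent.

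One genuine write-up slip to fix: you say ``let the first collapse be $\Q(\beta')\subseteq\Q(\beta)$'' and then immediately assert $[\Q(\beta):\Q(\beta')]=2$. These cannot both hold. What your computation actually uses is that the first degree-$1$ step is $\Q(\beta)\subseteq\Q(\gamma)$ with $\gamma^{2}=\beta=a^{1/2^{k}}$ and $k\ge 2$ (so that this is \emph{not} the step $\Q(a^{1/2})\subseteq\Q(a^{1/4})$); then the \emph{preceding} step $\Q(\beta')\subseteq\Q(\beta)$ has degree $2$, and since $k-1\ge 1$ you have $i\in\Q(\beta')$, giving $\beta=\pm 2u^{2}i\in\Q(\beta')$, the desired contradiction. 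With the indices straightened out, the argument is sound. The final ``denominator check'' ($2u^{2}\in\Z$ with $u\in\Q$ forces $u\in\Z$) is correct but deserves one explicit line.
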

 
The polynomial $x^n-1$ is a product of cyclotomic polynomials  and if $n=2^{n_1}u$ with $2\nmid u$, then  
\begin{equation*}
x^n+1=\prod\limits_{d|u}\Phi_{2^{n_1+1}d}(x).
\end{equation*}
Therefore, from now onwards we assume that $a>1,$ if not specified, and 
check the reducibility of the polynomial $x^n\pm a$ for $n\ge 2$.  If there exists a prime $p$ such that $p|a$ but $p^2\nmid a,$ then $x^n\pm a$ is irreducible by Eisenstein's criterion. In other words, if $a=p_1^{a_1}p_2^{a_2}\cdots p_k^{a_k}$ is the prime factorization of $a$ and $x^n\pm a$ is reducible, then
$a_i\ge 2$ for every $i\in \{1,2,\ldots,k\}.$  More generally, 
 
\begin{lemma}\label{lem:gcdge2}
 Let $n\ge 2$, $a=p_1^{a_1}p_2^{a_2}\cdots p_k^{a_k}$ be the prime factorization of $a$ and let $x^n\pm a$ be reducible. Then $\gcd(a_1,a_2,\ldots,a_k)\ge 2$ and $\gcd(\gcd(a_1,a_2,\ldots,a_k),n)>1.$ 
\end{lemma}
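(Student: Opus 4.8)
The plan is to extract the needed divisibility facts from the constant term of a nontrivial factorization. By Gauss's lemma we may write $x^{n}\pm a=g(x)h(x)$ with $g,h\in\Z[x]$ monic, $\deg g=m$, and $0<m<n$. Over $\C$ every root $r$ of $x^{n}\pm a$ satisfies $|r|^{n}=|r^{n}|=a$, so $|r|=a^{1/n}$; writing $g(x)=\prod_{j=1}^{m}(x-r_j)$ we get $c:=|g(0)|=\prod_{j=1}^{m}|r_j|=a^{m/n}$. Since $a>1$ and $g(0)\in\Z\setminus\{0\}$, the number $c$ is a positive integer, and raising to the $n$-th power gives the basic identity $c^{\,n}=a^{\,m}$.

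Next I would reduce the exponents. Set $d=\gcd(m,n)$ and write $m=dm'$, $n=dn'$ with $\gcd(m',n')=1$. Because $0<m<n$, $d$ is a proper divisor of $n$, so $n'=n/d\ge 2$; also $n'\mid n$. From $c^{\,n}=a^{\,m}$ we obtain $(c^{\,n'})^{d}=(a^{\,m'})^{d}$, and as $x\mapsto x^{d}$ is injective on the positive reals this forces $c^{\,n'}=a^{\,m'}$. Comparing, for each $i$, the exponent of $p_i$ on the two sides yields $n'e_i=m'a_i$, where $e_i\ge 0$ is the exponent of $p_i$ in $c$. Since $\gcd(m',n')=1$, it follows that $n'\mid a_i$ for every $i\in\{1,\dots,k\}$, hence $n'\mid\gcd(a_1,\dots,a_k)$.

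This finishes both claims: from $n'\ge 2$ we get $\gcd(a_1,\dots,a_k)\ge n'\ge 2$, and since $n'$ divides both $n$ and $\gcd(a_1,\dots,a_k)$ we have $n'\mid\gcd\!\bigl(\gcd(a_1,\dots,a_k),n\bigr)$, so this last quantity is at least $n'>1$.

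The arithmetic above is routine; the one step deserving a little care is the passage from $c^{\,n}=a^{\,m}$ to $c^{\,n'}=a^{\,m'}$ (dividing the exponents through by $d=\gcd(m,n)$), which relies only on unique factorization, together with the observation that $|g(0)|$ is forced to equal $a^{m/n}$ because each root of $x^{n}\pm a$ has modulus $a^{1/n}$. I do not expect a serious obstacle; in particular the argument is uniform in the sign of $a$, since only the moduli of the roots of $x^{n}\pm a$ enter.
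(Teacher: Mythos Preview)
Your argument is correct. Both you and the paper begin the same way: from a proper monic factor $g$ of degree $m<n$ one reads off $|g(0)|^{\,n}=a^{\,m}$ by noting that every root of $x^{n}\pm a$ has modulus $a^{1/n}$. From this point the two proofs diverge. The paper proceeds by induction on $k=\omega(a)$: it handles $k=1$ (invoking Eisenstein to start), then $k=2$, and then carries out an inductive step from $k$ to $k+1$, at each stage manipulating the equations $n d_i=a_i s$ to extract the common divisors. Your route is direct and non-inductive: writing $d=\gcd(m,n)$, $m=dm'$, $n=dn'$ with $\gcd(m',n')=1$, you cancel the common exponent to obtain $c^{\,n'}=a^{\,m'}$, and then a single application of unique factorization together with $\gcd(m',n')=1$ gives $n'\mid a_i$ for every $i$ simultaneously. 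This yields both conclusions at once, and in fact the sharper statement that $n/\gcd(m,n)$ divides $\gcd(a_1,\dots,a_k)$. The induction in the paper is therefore avoidable; your approach is shorter and buys a cleaner quantitative conclusion, while the paper's case analysis has the minor pedagogical advantage of making the base cases (and the role of Eisenstein when $k=1$) explicit.
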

\begin{proof}
 We prove the result by induction on $k=\omega(a)$, the number of distinct prime divisors of $a.$  The roots of $x^n\pm a$ are of the form $a^{1/n}(\mp \zeta_n)^e$, where $e\in \Z$ and  $\zeta_n$ is a primitive $n^{\text{th}}$ root of unity. Since the proof barely depends upon the sign of roots, we restrict to the case $x^n-a$. Let $f(x)$ be a proper factor of $x^n-a,$ where  $\deg(f)=s<n$. If $f(0)=\pm d$, then $\pm d=a^{s/n}\zeta_n^w$ for some $w\in \Z$. 

Let $k=1$ and $a=p_1^{a_1}$. From Eisenstein's criterion, $a_1\ge 2$. If $d=p_1^{\alpha},$ then $d^n=a^s$ gives, $\alpha n=a_1s$. Since $a_1\ge 2$ and $s<n$, we deduce that $(a_1, n)>1$. 

Let $k=2$ and $a=p_1^{a_1}p_2^{a_2}$. From $d^n=a^s,$ let $d=p_1^{d_1}p_2^{d_2}$ be the prime factorization of $d$. Then $nd_1=a_1s, nd_2=a_2s$ would give $d_1a_2=d_2a_1$. If $(a_1, a_2)=1$, then $d_1=a_1c$ for some $c|d_2$ and $nc=s<n$ is a contradicton. Thus, $m=(a_1, a_2)\ge 2$. Next we need to show that $(n,m)>1$. Suppose $a_1=mb_1, a_2=mb_2$ with $(b_1,b_2)=1.$ From $d_1a_2=d_2a_1$, we deduce that  $d_1b_2=d_2 b_1.$ Then $d_1=b_1 r$ for some $r|d_2.$ If $(n,m)=1$, then $nd_1=a_1s=mb_1s$ will give $n|s$, a contradiction. Hence, $(n,m)>1$.

Suppose the result is true for some $k\ge 2$. Thus, if $a=p_1^{a_1}\cdots p_k^{a_k}$, then $(a_1, \ldots, a_k)=u>1$ and $(u,n)>1$. To show that the result is true for $k+1$. Let $a=p_1^{a_1}\cdots p_k^{a_k}p_{k+1}^{a_{k+1}}=a_1^up_{k+1}^{a_{k+1}}$, where $a_1=p_1^{w_1}\cdots p_k^{w_k}$ and $(w_1, \ldots, w_k)=1$. From $d^n=a^s$, we can write $d$ as $d=b_1^vp_{k+1}^{d_{k+1}},$ where $b_1=p_1^{v_1}\cdots p_k^{v_k},$ $(v_1, \ldots, v_k)=1$. From the fundamental theorem of arithmetic, $d_{k+1}n=a_{k+1}s$ and $b_1=a_1,$ $vn=us$. That is $d_{k+1}u=a_{k+1}v$. If $(u,a_{k+1})=1,$ then $d_{k+1}=a_{k+1}h$ for some $h|v,$ and $na_{k+1}h=a_{k+1}s$ implies $n|s$. This contradicts the fact that $s<n$. Thus, $(u,a_{k+1})=m>1$. 

To show that $(n,m)>1$. Let $u=mu_1, a_{k+1}=ma_{k+1}',$ where $(u_1, a_{k+1}')=1$. From $d_{k+1}u=a_{k+1}v$, we get $u_1d_{k+1}=va_{k+1}'$. Since $(u_1,a_{k+1}')=1$, $d_{k+1}=a_{k+1}'t$ for some $t|v$. On the other hand, $nd_1=a_1s, nd_{k+1}=a_{k+1}s$ would imply $a_1d_{k+1}=d_1a_{k+1}$. If $a_1=ua_1'=mu_1a_1',$ then $a_1d_{k+1}=d_1a_{k+1}$ implies  $d_1=u_1a_1't$. Using this in  $nd_1=a_1s$, we have $nt=ms$. If $(n,m)=1$, then $n|s$ is a contradiction. Thus, $(n,m)>1$. 
By induction principle, the result is true for every $k\ge 1.$
\end{proof}

In other words, if $x^n\pm a$ is reducible, then $a$ has to be of the form $b^m,$  where $(n,m)>1$ and $m\ge 2$. With a rearrangement in powers, we can say

\begin{cor}\label{cor:restrictedb}
Let $n\ge 2$ and $x^n\pm a $ be reducible over $\Z$. Then $ a= b^m$ for some $m\ge 2, m|n$, and $b$ is either a prime number or $b=(p_1^{b_1}p_2^{b_2}\cdots p_k^{b_k})^d,$ where $k\ge 2, (b_1, b_2, \ldots, b_k)=1$ and $(d, n)=1.$   
\end{cor}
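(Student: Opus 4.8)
The plan is to read the required structure straight off the prime factorization of $a$, using Lemma~\ref{lem:gcdge2} as the only input, and then merely regroup exponents. Write $a=p_1^{a_1}\cdots p_k^{a_k}$ and set $g:=\gcd(a_1,\dots,a_k)$. By Lemma~\ref{lem:gcdge2}, $g\ge 2$ and $\gcd(g,n)>1$. Since $g\mid a_i$ for every $i$, putting $c_i:=a_i/g$ gives $\gcd(c_1,\dots,c_k)=1$ and
\[
a=\bigl(p_1^{c_1}\cdots p_k^{c_k}\bigr)^{g}=c^{g},\qquad c:=p_1^{c_1}\cdots p_k^{c_k}.
\]
So $a$ is already a perfect power whose base $c$ is ``primitive'' (its prime exponents have gcd $1$), and $\gcd(g,n)>1$. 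In particular $m_0:=\gcd(g,n)$ divides both $g$ and $n$, is $\ge 2$, and already yields $a=\bigl(c^{g/m_0}\bigr)^{m_0}$ with $m_0\mid n$; the remaining content of the corollary is to tune the exponent so the base has the advertised finer shape.

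I would then split on $k$. If $k=1$ then $c=p_1$ and $g=a_1$: choosing for the exponent a suitable common divisor $m\ge 2$ of $a_1$ and $n$ (possible since $\gcd(a_1,n)>1$), we get $a=(p_1^{a_1/m})^{m}$ with $m\mid n$, the base being a prime power — and the prime $p_1$ itself when $a_1\mid n$. If $k\ge 2$, observe first that any way of writing $a=b^m$ forces, by unique factorization, $m\mid g$ and $b=c^{g/m}$; so when we additionally demand $m\mid n$ and $m\ge 2$, the base is automatically $b=(p_1^{c_1}\cdots p_k^{c_k})^{d}$ with $d:=g/m$ and $\gcd(c_1,\dots,c_k)=1$, while $c$ itself already supplies the $k\ge 2$ distinct primes. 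Thus for $k\ge 2$ everything except the condition $\gcd(d,n)=1$ holds for \emph{every} legitimate choice of $m$.

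Securing $\gcd(d,n)=1$ is the one genuine step, and the step I expect to be the obstacle. Letting $v_q(\cdot)$ denote the $q$-adic valuation, the right choice is to put into $m$ the entire $q$-primary part of $g$ for every prime $q$ dividing $\gcd(g,n)$, i.e.\ $m:=\prod_{q\mid\gcd(g,n)}q^{v_q(g)}$, so that $d=g/m$ is supported only on primes not dividing $n$ and hence $\gcd(d,n)=1$ by construction. What must be checked is that this $m$ still divides $n$ — so that $c^{d}$ is an honest integer base and $a=(c^{d})^{m}$ — and that $m\ge 2$, the latter being immediate from $\gcd(g,n)>1$. Granting the divisibility $m\mid n$, the corollary follows; verifying it amounts to a prime-by-prime comparison of $v_q(g)$ with $v_q(n)$, which is exactly the delicate bookkeeping, whereas the perfect-power factorization $a=c^{g}$ and the primitivity of $c$ come for free from Lemma~\ref{lem:gcdge2} and unique factorization.
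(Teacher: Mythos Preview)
The paper gives no argument for this corollary beyond the sentence ``With a rearrangement in powers, we can say'' preceding its statement; it is treated as an immediate regrouping of Lemma~\ref{lem:gcdge2}. Your derivation of $a=c^{\,g}$ with $c=p_1^{c_1}\cdots p_k^{c_k}$, $\gcd(c_1,\dots,c_k)=1$, $g\ge 2$ and $\gcd(g,n)>1$ is exactly that regrouping, and the split on $k$ is the natural way to organise it.

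The step you yourself flag as ``delicate bookkeeping'' is a real gap, and it cannot be closed: your candidate $m=\prod_{q\mid(g,n)}q^{v_q(g)}$ need not divide $n$ (already $g=8$, $n=12$ gives $m=8\nmid 12$), and in fact \emph{no} admissible $m$ works in general. Take $n=12$, $a=6^{8}$: the polynomial $x^{12}-6^{8}=(x^{3}-36)(x^{3}+36)(x^{6}+6^{4})$ is reducible, yet the only $m\ge 2$ dividing both $g=8$ and $n=12$ are $m\in\{2,4\}$, yielding $d\in\{4,2\}$, neither coprime to $12$. The $k=1$ clause is likewise too tight: $x^{6}-p^{4}=(x^{3}-p^{2})(x^{3}+p^{2})$ forces $m=2$, $b=p^{2}$, which is not prime. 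Thus the corollary as printed is slightly stronger than what Lemma~\ref{lem:gcdge2} delivers. What the rearrangement \emph{does} give is the choice $m=\gcd(g,n)$, $d=g/m$, for which one obtains only $\gcd(d,\,n/m)=1$ (indeed, a common prime $p$ of $g/m$ and $n/m$ would make $pm\mid\gcd(g,n)=m$), and this weaker conclusion appears to be all the subsequent lemmas actually require. You should aim to prove that version rather than the one printed.
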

Suppose $f(x)=x^{25}\pm 6^8$. Then $b=6,$ $m=8$, and  $(8,25)=1$ implies that the polynomial $x^{25}\pm 6^8$ is irreducible by Corollary \ref{cor:restrictedb}. Let $g(x)=x^{25}\pm (243)^2$. If we consider $b=243$, then  $(2,25)=1$ imply that $x^{25}\pm (243)^2$ is irreducible. But $x^5\pm 9|g(x)$. The reason is, $b=243$ is not as in Corollary \ref{cor:restrictedb}. Since $243=3^5,$  $b$  will be $3^2$ and $m=5$ so that $m|n$. Because of this reason, we will say

A positive integer `{\em $b$ has the property $\P$'} if $b$ is in the form as given in Corollary \ref{cor:restrictedb}.

\begin{lemma}\label{lem:m|n}
Let $m\ge 2, m|n,$ and $b$ has the property $\P$. Then $x^n\pm b^m$ is reducible except possibly for $x^n+b^{2^r}, r\ge 1$. 
\end{lemma}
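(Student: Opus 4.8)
The plan is to split into two cases according to whether the exponent $m$ is divisible by an odd prime, and in each case to write down an explicit factorization of $x^n\pm b^m$. Note that the hypothesis ``$b$ has property $\P$'' will not actually be used: only $m\ge 2$ and $m\mid n$ are needed for the factorizations below, the role of property $\P$ being merely to normalize the pair $(b,m)$ for later use.

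First I would treat the case in which $m$ has an odd prime divisor $p$. Since $m\mid n$, also $p\mid n$, and $c:=b^{m/p}$ is an integer with $c\ge 2$. Substituting $y=x^{n/p}$ into the identity $y^p-c^p=(y-c)\sum_{i=0}^{p-1}y^{p-1-i}c^i$ and, since $p$ is odd, into $y^p+c^p=(y+c)\sum_{i=0}^{p-1}(-1)^iy^{p-1-i}c^i$, one obtains
\[
x^n\pm b^m=\bigl(x^{n/p}\pm c\bigr)\sum_{i=0}^{p-1}(\mp 1)^i\,x^{(n/p)(p-1-i)}c^i .
\]
Here the first factor has degree $n/p\ge 1$ and the second has degree $(n/p)(p-1)\ge p-1\ge 2$, so both are nonconstant and $x^n\pm b^m$ is reducible.

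Next I would treat $m=2^r$ with $r\ge 1$. Then $2\mid m\mid n$, so $n$ is even, and with $c:=b^{2^{r-1}}$ one has the factorization into nonconstant polynomials
\[
x^n-b^{2^r}=\bigl(x^{n/2}-c\bigr)\bigl(x^{n/2}+c\bigr),
\]
which settles $x^n-b^{2^r}$. Since the case ``$m$ not a power of $2$'' (both signs) and the case $m=2^r$ with the minus sign are now covered, the only polynomial of the form $x^n\pm b^m$ left undecided is $x^n+b^{2^r}$, precisely the exception in the statement.

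There is no serious obstacle here; the only point needing care is to check that no factor collapses to a constant in the extreme cases $n/p=1$ or $n/2=1$, which the degree counts above handle. It is worth recording that $x^n+b^{2^r}$ genuinely can be reducible — for instance $x^4+2^2=(x^2-2x+2)(x^2+2x+2)$ via the Sophie Germain identity — so ``except possibly'' cannot be dropped, and the precise description of when $x^n+b^{2^r}$ factors is deferred to the later analysis of the $a=-4b^4$ case.
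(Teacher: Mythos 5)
Your proof is correct, and it takes a somewhat different route from the paper's. The paper handles the minus sign by pulling out the full root, $x^n-b^m=\left(x^{n/m}-b\right)\left(x^{n(m-1)/m}+\cdots+b^{m-1}\right)$, and handles the plus sign by writing $m=2^rm_1$ with $m_1$ odd and expressing $x^n+b^m$ as the product $\prod_{d\mid m_1}b^{2^r\varphi(d)}\Phi_{2^{r+1}d}\left(x^{n/m}/b\right)$ of homogenized cyclotomic polynomials, which has at least two nonconstant factors exactly when $m$ is not a power of $2$, leaving $x^n+b^{2^r}$ as the only undecided case. You instead extract a single odd prime $p\mid m$ and use the elementary identities for $y^p\mp c^p$ with $y=x^{n/p}$, $c=b^{m/p}$, plus a difference of squares for $x^n-b^{2^r}$; this is more elementary (no cyclotomic polynomials and no need to check that the homogenized factors lie in $\Z[x]$), at the cost of giving only one nontrivial split rather than the complete factorization that the cyclotomic product displays. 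Both arguments cover exactly the same cases and leave precisely $x^n+b^{2^r}$, $r\ge 1$, open, and your observation that property $\P$ is never used matches the paper's own remark immediately after the lemma.
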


\begin{proof} If $m|n$, then 
\begin{equation*}
x^n-b^m=\left(x^{n/m}-b\right)\left(x^{n(m-1)/m}+x^{n(m-2)/m}b+\cdots+x^{n/m}b^{m-2}+b^{m-1}\right).
\end{equation*}
Let $m=2^{r}m_1$, where $2\nmid m_1$ and $r\ge 0$. Then
\begin{align}
x^n+b^m&= \prod\limits_{d|m_1}b^{2^r\varphi(d)}\Phi_{2^{r+1}d}\left(\frac{x^{n/m}}{b}\right), \notag
\end{align}
where $b^{2^r\varphi(d)}\Phi_{2^{r+1}d}\left(\frac{x^{n/m}}{b}\right)\in \Z[x]$ and $\varphi$ is the Euler totient function.  
\end{proof}

Lemma \ref{lem:m|n} is true even if $b$ does not have the property $\P$. If $m=2^r\ge 2, m|n,$ and $b$ has the property $\P$, then the reducibility condition of $x^n+b^m$ completes the proof of Theorem \ref{thm:capelli}.

Selmer(\cite{selmer}, p.298) made the following observation. Let $g(x)\in \Z[x]$ be an arbitrary irreducible polynomial of degree $n$. If $g(x^2)$ is reducible, then, using the fact that $\Z[x]$ is a unique factorization domain, we get $$g(x^2)=(-1)^ng_1(x)g_1(-x),$$  where $g_1(x)$ is an irreducible polynomial in $\Z[x]$. Thus, if  $g_1(x)=a_nx^n+a_{n-1}x^{n-1}+\cdots+a_1x+a_0$, then 
\begin{equation*}
g(x^2)=(a_nx^n+a_{n-2}x^{n-2}+\cdots+a_0)^2-(a_{n-1}x^{n-1}+\cdots+a_1x)^2.
\end{equation*}
Let $k$ be an odd integer. Then $g(k^2)\equiv g(1)\pmod{4}$. Since the right hand side of the last equation is the difference between the two squares, $g(k^2)\equiv 0, \pm 1\pmod{4}$. Combining all of these, one can conclude that 

\begin{lemma}\label{general}
Let $g(x)\in \Z[x]$ be an irreducible polynomial. 
\begin{enumerate}[label=(\alph*)]
\item\label{gpart1} If $g(k^2)\equiv 2\pmod{4}$ for an odd integer $k$, then $g(x^2)$ is irreducible over $\Z$. 
\item\label{gpart2} If $g(x^2)$ is reducible, then there are unique (up to sign) polynomials $f_1(x)$ and $f_2(x)$ such that $g(x^2)=f_1(x)^2-f_2(x)^2$. Furthermore, in this case, we can write $f_1(x)=h_1(x^2)$ and $f_2(x)=xh_2(x^2),$ where $h_1(x),h_2(x)\in \Z[x]$. 
\end{enumerate}
\end{lemma}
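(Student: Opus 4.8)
The plan is to read off both parts of the lemma from Selmer's factorization recalled just above, adding only elementary congruence and unique-factorization arguments.

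For part \ref{gpart1}, I would argue by contradiction. If $g(x^2)$ is reducible, the displayed identity preceding the lemma writes $g(x^2)=f_1(x)^2-f_2(x)^2$ with $f_1(x)=a_nx^n+a_{n-2}x^{n-2}+\cdots$ and $f_2(x)=a_{n-1}x^{n-1}+\cdots$ in $\Z[x]$, where the $a_i\in\Z$ are the coefficients of the irreducible factor $g_1$. Evaluating at the given odd integer $k$ gives $g(k^2)=f_1(k)^2-f_2(k)^2$, a difference of two integer squares. Since a perfect square is congruent to $0$ or $1$ modulo $4$, such a difference is $\equiv 0,1,$ or $3\pmod 4$, never $\equiv 2$; this contradicts $g(k^2)\equiv 2\pmod 4$, so $g(x^2)$ is irreducible. (The oddness of $k$ is not strictly needed here, but it matches the statement and the observation $g(k^2)\equiv g(1)\pmod 4$ noted above.)

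For part \ref{gpart2}, I would obtain existence together with the claimed shape by splitting $g_1$ into its even- and odd-degree parts: write $g_1(x)=h_1(x^2)+xh_2(x^2)$ with $h_1,h_2\in\Z[x]$. Then $g_1(x)g_1(-x)=h_1(x^2)^2-x^2h_2(x^2)^2$, so Selmer's identity gives $g(x^2)=(-1)^n\bigl(h_1(x^2)^2-(xh_2(x^2))^2\bigr)$, which is $f_1(x)^2-f_2(x)^2$ with $f_1(x)=h_1(x^2)$ and $f_2(x)=xh_2(x^2)$ (the two squares being interchanged when $\deg g$ is odd, since then $(-1)^n=-1$). For uniqueness, suppose $g(x^2)=f_1^2-f_2^2=\widetilde f_1^2-\widetilde f_2^2$, where both representations come from a genuine factorization (i.e.\ $f_1\pm f_2$ and $\widetilde f_1\pm\widetilde f_2$ are all non-units). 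Then $(f_1-f_2)(f_1+f_2)=(\widetilde f_1-\widetilde f_2)(\widetilde f_1+\widetilde f_2)$, and the left side equals $\pm g_1(x)g_1(-x)$, a product of two irreducible and non-associate polynomials of $\Z[x]$. Unique factorization in $\Z[x]$ then forces $\{\widetilde f_1-\widetilde f_2,\ \widetilde f_1+\widetilde f_2\}$ to equal $\{\epsilon_1 g_1(x),\ \epsilon_2 g_1(-x)\}$ for signs $\epsilon_i\in\{\pm1\}$; solving the two linear equations for $\widetilde f_1$ and $\widetilde f_2$ yields $\widetilde f_1=\pm f_1$ and $\widetilde f_2=\pm f_2$.

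The congruence step in part \ref{gpart1} is immediate once the difference-of-two-squares form is available, so the substantive work is all in part \ref{gpart2}. The main obstacle I anticipate is the uniqueness claim: one must discard the degenerate factorization $g(x^2)=g(x^2)\cdot 1$, which — when $g(x^2)+1\in 2\,\Z[x]$ — does produce the spurious representation $g(x^2)=\left(\tfrac{g(x^2)+1}{2}\right)^2-\left(\tfrac{g(x^2)-1}{2}\right)^2$, and one must verify that $g_1(x)$ and $g_1(-x)$ are non-associate (an even or odd $g_1$ would force $g$ to be reducible), so that unique factorization pins down the ordered pair $(\widetilde f_1,\widetilde f_2)$ up to sign rather than merely the unordered pair of its two factors.
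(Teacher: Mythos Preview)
Your proposal is correct and follows essentially the same route as the paper: part~\ref{gpart1} is deduced from Selmer's difference-of-two-squares identity (displayed just above the lemma) via the elementary congruence on integer squares, and part~\ref{gpart2} is obtained by splitting $g_1$ into even and odd parts together with unique factorization in $\Z[x]$. The paper in fact says almost nothing beyond ``the proof of \ref{gpart2} follows from the fact that $\Z[x]$ is a unique factorization domain,'' so your write-up is more explicit; in particular, your remarks on discarding the degenerate representation coming from $g(x^2)\cdot 1$, on checking that $g_1(x)$ and $g_1(-x)$ are non-associate, and on the swap of the two squares when $\deg g$ is odd, are genuine subtleties that the paper leaves implicit.
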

The proof of \ref{gpart2} follows from the fact  that $\Z[x]$ is a unique factorization domain. 

\begin{lemma}\label{capelli:lem3}
Let $m=2^r\ge 2$ and $n$ be an odd positive integer. If $b$ has the property $\P$, then $x^{2^in}+b^m$ is irreducible for every $i$, $0\le i\le r$.
\end{lemma}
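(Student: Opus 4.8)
The plan is to induct on $i$, taking the case $i=0$ from the Eisenstein-type Lemma~\ref{lem:gcdge2} and the step $i-1\to i$ from Selmer's Lemma~\ref{general}. For $i=0$: if $x^n+b^{2^r}$ were reducible, write $b=p_1^{e_1}\cdots p_k^{e_k}$, so that $b^{2^r}$ has prime exponents $2^re_1,\dots,2^re_k$, whose gcd is $2^r\gcd(e_1,\dots,e_k)$; Lemma~\ref{lem:gcdge2} then forces this gcd to have a factor in common with $n$. Since $n$ is odd, that common factor divides $\gcd(e_1,\dots,e_k)$, which is coprime to $n$ because $b$ has property $\P$ — a contradiction. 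So $x^n+b^{2^r}$ is irreducible.

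For the step, suppose $g(x):=x^{2^{i-1}n}+b^{2^r}$ is irreducible with $1\le i\le r$, and aim at the irreducibility of $g(x^2)=x^{2^in}+b^{2^r}$. If $b$ is odd, then for any odd $k$ both $k^{2^in}$ and $b^{2^r}$ are $\equiv 1\pmod 8$ (the exponents $2^in$, $2^r$ are even), so $g(k^2)\equiv 2\pmod 4$, and Lemma~\ref{general}\ref{gpart1} gives irreducibility of $g(x^2)$ at once. So I may assume $b$ is even; suppose for contradiction $g(x^2)$ is reducible, so that by Lemma~\ref{general}\ref{gpart2} we have $g(y)=h_1(y)^2-y\,h_2(y)^2$ with $h_1,h_2\in\Z[y]$. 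If $i=1$ the degree $n$ of $g$ is odd, hence carried by the odd-degree summand $-y\,h_2(y)^2$, which would force $1=-(\text{leading coefficient of }h_2)^2\le 0$ — impossible; this disposes of $i=1$.

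So let $i\ge 2$, and put $N:=2^{i-1}n=\deg g$, which is now even. Comparing degrees in $g(y)=h_1(y)^2-y\,h_2(y)^2$ forces $\deg h_1=N/2$ with leading coefficient $\pm 1$, and $\deg h_2\le N/2-1$. Reducing the identity modulo a prime $p\mid b$ (using $v_p(b^{2^r})\ge 2^r\ge 2$) gives $\bar h_1(y)^2-y\,\bar h_2(y)^2=y^N$ in $\F_p[y]$, and peeling off the factor $y$ repeatedly forces $\bar h_1(y)=\pm y^{N/2}$ and $\bar h_2(y)=0$. Hence, writing $\rho$ for the product of the distinct primes dividing $b$, I can put $h_1(y)=y^{N/2}+\rho\,\ell(y)$ and $h_2(y)=\rho\,\eta(y)$ with $\deg\ell,\deg\eta<N/2$, and the identity rearranges to
\begin{equation*}
b^{2^r}=2\rho\,y^{N/2}\ell(y)+\rho^2\!\left(\ell(y)^2-y\,\eta(y)^2\right).
\end{equation*}
Its constant term yields $\rho^2\ell(0)^2=b^{2^r}$, and then comparing the coefficients of $y,y^2,\dots$ and following the $2$-adic valuations that these equations impose on the coefficients of $\ell$ and $\eta$ leads to an over-determined system with no integer solution; this is exactly where the hypotheses $b$ even and $r\ge 2$ enter. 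That contradiction closes the inductive step, and the lemma follows for all $0\le i\le r$.

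The hard part will be this last step — ruling out a difference-of-squares factorization of $g(x^2)$ when $b$ is even and $\deg g$ is even. There the congruence test of Lemma~\ref{general}\ref{gpart1} is of no use (one only gets $g(k^2)\equiv 1\pmod 4$, a legitimate difference of squares), and Corollary~\ref{cor:restrictedb} does not settle it either (it merely permits $b^{2^r}=(b^{2^{r-j}})^{2^j}$ with $b^{2^{r-j}}$ again having property $\P$); so the coefficient-by-coefficient, $2$-adic analysis sketched above seems unavoidable, and arranging it to terminate cleanly in every subcase is the delicate point.
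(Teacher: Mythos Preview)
Your induction skeleton and the base case $i=0$ match the paper's, and your treatment of odd $b$ via the congruence test of Lemma~\ref{general}\ref{gpart1} is a valid alternative route that the paper does not take. But the proof has a genuine gap precisely where you flag it: for $b$ even and $i\ge 2$ you only \emph{sketch} a $2$-adic coefficient chase (``following the $2$-adic valuations \ldots\ leads to an over-determined system with no integer solution'') and explicitly concede that ``arranging it to terminate cleanly in every subcase is the delicate point.'' As written, nothing in that paragraph actually excludes a Pell-type representation $g(y)=h_1(y)^2-yh_2(y)^2$; the equations you set up are consistent at the level you carry them, and you have not exhibited the contradiction.

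The paper avoids this difficulty entirely, and its device is the missing idea in your attempt. Instead of trying to pin down $h_1,h_2$ from scratch by congruences, the paper writes down the explicit identity
\[
f(x^2)=x^{2^{i+1}n}+b^m=\bigl(x^{2^in}+b^{m/2}\bigr)^2-2\,b^{m/2}x^{2^in}
\]
and invokes the \emph{uniqueness} clause of Lemma~\ref{general}\ref{gpart2}: if $f(x^2)$ is reducible, the Selmer decomposition must agree with this expression, so $2b^{m/2}x^{2^in}$ is forced to be of the form $(x\,h_2(x^2))^2$. That is a single monomial condition. It forces $2b^{m/2}$ to be a perfect square, which (since $m/2=2^{r-1}$) can only happen when $r=1$; and since the inductive step assumes $i+1\le r$, this pushes us back to $i=0$, a case already disposed of by the odd-exponent argument. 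There is no case split on the parity of $b$, and no coefficient-by-coefficient analysis. This is the step you should replace your $2$-adic sketch with.
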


\begin{proof}
We proceed by induction on $i$. If $i=0$ and $b$ has the property $\P$, then $f(x)=x^n+b^m$ is irreducible by Lemma \ref{lem:gcdge2}. If $i=1$, then $f(x)=x^n+b^m$ is irreducible, and if $f(x^2)=x^{2n}+b^m=(x^n+b^{m/2})^2-2b^{m/2}x^n$ is reducible, from Lemma \ref{general}, $2b^{m/2}x^n$ has to be of the form $x^2h(x^2)^2$ for some $h(x)\in \Z[x]$. Since $n$ is odd, this is not possible and hence $f(x^2)$ is irreducible. 

Suppose the result is true for some $i$, $0\le i\le r$ and we will show that it is true for $i+1\le r$. So, $f(x)=x^{2^in}+b^m$ is irreducible for some $i\le r$. From Lemma \ref{general}, if  
\begin{equation*}
f(x^2)=x^{2^{i+1}n}+b^m=(x^{2^in}+b^{m/2})^2-2b^{m/2}x^{2^in}
\end{equation*}
is reducible, then $2b^{m/2}x^{2^in}$ has to be of the form $(xg(x^2))^2$ for some $g\in \Z[x]$. This is possible only when $m=2$ and $b=2^{\alpha}b_1^2,$ where $\alpha, b_1$ are odd positive integers. That is $r=1$ and hence $i=0$. We have already seen that $f(x^2)$ is irreducible in this case. Therefore, by the induction principle, $x^{2^in}+b^{2^r}$ is irreducible for every $i\le r$.  
\end{proof}

\begin{lemma}
Let $m=2^r\ge 2$ and let $b$ be an odd integer which has the property $\P$. If $m|n,$ then $x^n+b^m$ is irreducible. 
\end{lemma}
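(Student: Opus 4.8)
The plan is to reduce everything, by an induction that strips factors of $2$ off the exponent, to the already‑proved Lemma~\ref{capelli:lem3}. Since $m=2^{r}\mid n$ and $r\ge 1$, write $n=2^{s}n'$ with $n'$ odd; then $s\ge r$. I will prove by induction on $i\ge r$ that $x^{2^{i}n'}+b^{2^{r}}$ is irreducible over $\Z$, and then take $i=s$.

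For the base case $i=r$, the polynomial $x^{2^{r}n'}+b^{2^{r}}$ is exactly the $i=r$ instance of Lemma~\ref{capelli:lem3} (with $n'$ in the role of the odd integer there), and it is here --- and only here --- that the hypothesis ``$b$ has the property $\P$'' is used.

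For the inductive step, assume $g(x):=x^{2^{i}n'}+b^{2^{r}}$ is irreducible for some $i\ge r$. The point is a congruence: since $b$ is odd and $r\ge 1$, the integer $b^{2^{r}}=\bigl(b^{2^{r-1}}\bigr)^{2}$ is the square of an odd integer, so $b^{2^{r}}\equiv 1\pmod 8$, and hence $g(1)=1+b^{2^{r}}\equiv 2\pmod 4$. Applying Lemma~\ref{general}\ref{gpart1} to the irreducible polynomial $g$ with the odd integer $k=1$ shows that $g(x^{2})=x^{2^{i+1}n'}+b^{2^{r}}$ is irreducible, which closes the induction and yields the lemma.

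I do not anticipate a real obstacle: the whole argument rests on the single observation that oddness of $b$ together with $r\ge 1$ forces $b^{2^{r}}\equiv 1\pmod 8$, which is precisely what keeps $g(1)\equiv 2\pmod 4$ at every stage so that Lemma~\ref{general}\ref{gpart1} can be invoked, while the base case is subsumed by Lemma~\ref{capelli:lem3}. If one preferred, the inductive step could instead be run through Lemma~\ref{general}\ref{gpart2} in the style of the proof of Lemma~\ref{capelli:lem3}, noting that $2b^{2^{r-1}}x^{2^{i}n'}$ cannot be of the form $x^{2}h(x^{2})^{2}$ since $2b^{2^{r-1}}$ is not a perfect square for $b$ odd; but the congruence route is shorter.
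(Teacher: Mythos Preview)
Your argument is correct and mirrors the paper's own proof almost exactly: both start from the base case $x^{2^{r}n'}+b^{2^{r}}$ via Lemma~\ref{capelli:lem3} and then peel off the remaining factors of $2$ in the exponent by repeated use of Lemma~\ref{general}\ref{gpart1}, the key input being that $b$ odd forces the constant term $b^{2^{r}}\equiv 1\pmod 4$ so that $g(1)\equiv 2\pmod 4$ at every stage. The only cosmetic differences are that the paper writes $n=mt$ and says ``apply Lemma~\ref{general} repeatedly'' rather than spelling out the induction, and does not single out $k=1$.
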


\begin{proof}
Let $n=mt$. If $t$ is odd, then by Lemma~\ref{capelli:lem3}, $x^n+b^m$ is irreducible. Let $t=2^{t_1}u$, $t_1\ge 1$ and $u$ is odd. From Lemma \ref{capelli:lem3}, $g(x)=x^{mu}+b^m$ is irreducible. Since $b$ is odd, $g(k^2)\equiv 2\pmod{4}$ for any odd integer $k$. Applying Lemma \ref{general} repeatedly to $g(x)$, the result follows. 
\end{proof}

\begin{cor}\label{cor:tbeven}
Let $m=2^r\ge 2,$ $m|n,$ and $b$ has the property $\P$. If  $x^n+b^m$ is reducible, then both $b$ and $\frac{n}{m}$ are even integers.  
\end{cor}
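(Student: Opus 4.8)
The plan is to deduce both assertions by contraposition from the two lemmas that immediately precede this corollary; essentially no new computation is needed, since the statement is just a repackaging of those results into the form required for the last step of Theorem \ref{thm:capelli}.

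First I would settle the parity of $b$. Suppose, for contradiction, that $b$ is odd. Since $b$ has the property $\P$ and $m=2^r\ge 2$ divides $n$, the lemma immediately preceding this corollary (the one asserting that $x^n+b^m$ is irreducible when $b$ is odd, has the property $\P$, and $m\mid n$) applies verbatim and forces $x^n+b^m$ to be irreducible --- contradicting the hypothesis that it is reducible. Hence $b$ must be even.

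Next I would settle the parity of $n/m$. Write $t=n/m$, which is a positive integer because $m\mid n$, so that $n=2^r t$ and $x^n+b^m=x^{2^r t}+b^{2^r}$. Suppose, for contradiction, that $t$ is odd. Then Lemma \ref{capelli:lem3} applies with its odd integer taken to be $t$, with the exponent parameter $r$ (equivalently $m=2^r$) unchanged, and with index $i=r$, which indeed satisfies $0\le i\le r$; it gives that $x^{2^r t}+b^{2^r}=x^n+b^m$ is irreducible, again a contradiction. Hence $t=n/m$ is even, which completes the argument.

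I do not expect a genuine obstacle here: the real work is already contained in Lemma \ref{capelli:lem3} and in the preceding lemma, and this corollary only recombines them. The one point requiring care is the variable bookkeeping when invoking Lemma \ref{capelli:lem3} --- one must identify the ``$n$'' of that lemma with the present $t=n/m$ while keeping $m=2^r$ fixed, and confirm that the hypothesis ``$b$ has the property $\P$'' is inherited from the hypotheses of the corollary, which it plainly is.
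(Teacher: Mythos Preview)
Your proposal is correct and is precisely the argument the paper intends: the corollary is stated without proof in the paper because it is an immediate contrapositive consequence of Lemma~\ref{capelli:lem3} (giving the parity of $n/m$) and the unnamed lemma just before it (giving the parity of $b$), exactly as you spell out. Your bookkeeping remark about identifying the ``$n$'' of Lemma~\ref{capelli:lem3} with the present $t=n/m$ and taking $i=r$ is the only subtlety, and you handle it correctly.
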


\begin{lemma}\label{lastlem}
Let  $t,r\in \N$ and $b$ has the property $\P$. Then $x^{2^rt}+b^{2^r}$ is reducible if and only if $t$ is even, $r=1$, and $b=2d^2$ for some $d\in \N$. 
\end{lemma}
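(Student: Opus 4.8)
The plan is to reduce Lemma~\ref{lastlem} to the results already established and then handle the single remaining case by a direct factorization argument. First I would observe that the ``only if'' direction is essentially Corollary~\ref{cor:tbeven} together with Lemma~\ref{capelli:lem3}: if $x^{2^r t}+b^{2^r}$ is reducible, then $m=2^r$ divides $n=2^r t$, so Corollary~\ref{cor:tbeven} forces both $b$ and $t=n/m$ to be even. It remains to pin down $r$. Here I would invoke Lemma~\ref{capelli:lem3} in the form: if $r\ge 2$ then for any odd $n'$ the polynomial $x^{2^i n'}+b^{2^r}$ is irreducible for all $i\le r$; writing $t=2^{t_1}u$ with $u$ odd and chasing $g(x)=x^{2^r u}+b^{2^r}$ through repeated applications of Lemma~\ref{general}\ref{gpart1} (valid once $b$ is even, since then $g(k^2)\equiv b^{2^r}\equiv 0\pmod 4$ — wait, that gives $0$, not $2$). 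So instead, for $r\ge 2$ I would argue directly from Lemma~\ref{general}\ref{gpart2}: at each squaring step $x^{2^{j+1}n'}+b^{2^r}=(x^{2^j n'}+b^{2^{r-1}})^2-2b^{2^{r-1}}x^{2^j n'}$, and reducibility would require $2b^{2^{r-1}}x^{2^j n'}$ to be a perfect square in $\Z[x]$ of the special shape $(x h(x^2))^2$; but $2b^{2^{r-1}}$ has odd $2$-adic valuation precisely when $r=1$ (valuation of $2b^{2^{r-1}}$ is $1+2^{r-1}v_2(b)$, which is odd iff $2^{r-1}v_2(b)$ is even, automatic for $r\ge 2$ — so the valuation is \emph{even} for $r\ge2$, hence $2b^{2^{r-1}}$ fails to be a square because of the lone factor $2$... let me recompute: $v_2(2b^{2^{r-1}})=1+2^{r-1}v_2(b)$, and for this to be a square coefficient we need it even, i.e. $2^{r-1}v_2(b)$ odd, i.e. $r=1$ and $v_2(b)$ odd). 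This is exactly the computation already performed inside the proof of Lemma~\ref{capelli:lem3}, so I would cite that lemma rather than redo it.

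Thus the ``only if'' direction yields: reducibility implies $r=1$, $t$ even, $b$ even, and moreover (from the square-shape requirement on $2b x^t$ when $r=1$) that $b=2d^2$ with $d$ odd — but since we only claim $d\in\N$, it suffices to record $b=2d^2$. The ``if'' direction is the content I would then verify explicitly: assume $t=2t'$, $r=1$, $b=2d^2$. Then
\begin{equation*}
x^{2t}+b^2 = x^{4t'}+4d^4 = \left(x^{2t'}+2d^2+2d\,x^{t'}\right)\left(x^{2t'}+2d^2-2d\,x^{t'}\right),
\end{equation*}
which is the classical Sophie Germain identity $X^2+4D^2=(X+2D+2\sqrt{?}\,)\cdots$ applied with $X=x^{2t'}$, $D=d^2$, using that $t'\in\N$ so $x^{t'}\in\Z[x]$. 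Both factors have positive degree $2t'\ge 2$, so this is a genuine factorization over $\Z$, completing the proof.

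The main obstacle I anticipate is bookkeeping rather than any genuine difficulty: carefully matching the hypothesis ``$b$ has the property $\P$'' against the conclusion ``$b=2d^2$'' — in particular checking that $b=2d^2$ with the property $\P$ forces $d$ to be odd (else $b$ would be a higher power, contradicting the normalization in Corollary~\ref{cor:restrictedb}), and that $2\cdot(\text{odd})^2$ indeed has the property $\P$ — and making sure the induction/repeated-squaring invocation of Lemma~\ref{general} in the $r\ge 2$ case is phrased so that the ``special shape'' obstruction is visible at the very first squaring (which it is, since the obstructing coefficient $2b^{2^{r-1}}$ does not depend on how many times we have squared). Once those are in place, the factorization identity does all the real work.
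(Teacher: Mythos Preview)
Your proposal is correct and follows essentially the same route as the paper: the paper also handles the ``if'' direction by the Sophie Germain factorization $x^{2t}+4d^4=(x^t-2dx^{t/2}+2d^2)(x^t+2dx^{t/2}+2d^2)$, and for the ``only if'' direction cites Corollary~\ref{cor:tbeven} to get $t,b$ even, uses Lemma~\ref{capelli:lem3} to obtain an irreducible base $h(x)=x^{2^ru}+b^{2^r}$, locates the first index $i$ with $h(x^{2^{i-1}})$ irreducible and $h(x^{2^i})$ reducible, and then applies the uniqueness in Lemma~\ref{general}\ref{gpart2} to the decomposition $(x^{2^{r+i-1}u}+b^{2^{r-1}})^2-2b^{2^{r-1}}x^{2^{r+i-1}u}$, forcing $r=1$ and $b=2d^2$ via exactly the $2$-adic valuation computation you outline. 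The only refinement the paper adds over your sketch is to set up that ``first $i$'' explicitly, so that the irreducibility hypothesis of Lemma~\ref{general}\ref{gpart2} is genuinely in force when the obstruction is read off.
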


\begin{proof}
If $t$ is even, $r=1$ ,and $b=2d^2,$ then $$x^{2t}+4d^4=(x^t+2d^2)^2-4d^2x^t=(x^{t}-2dx^{t/2}+2d^2)(x^{t}+2dx^{t/2}+2d^2).$$
Conversely, let $g(x)=x^{2^rt}+b^{2^r}$ is reducible. By Corollary \ref{cor:tbeven}, both $t$ and $b$ are even integers. Let $t=2^{t_1}u, b=2^{b_1}v,$ where $u,v$ are odd integers and $t_1, b_1\ge 1$. By Lemma \ref{capelli:lem3}, the polynomial $h(x)=x^{2^ru}+b^{2^r}$ is irreducible. Since $g(x)=h(x^{2^{t_1}})$ is reducible, there is some $i$, $1\le i \le t_1$ such that $h(x^{2^{i-1}})$ is irreducible and 

\begin{equation*}
h(x^{2^i})=x^{2^{r+i}u}+2^{2^rb_1}v^{2^r}=(x^{2^{r+i-1}u}+2^{2^{r-1}b_1}v^{2^{r-1}})^2-2^{2^{r-1}b_1+1}v^{2^{r-1}}x^{2^{r+i-1}u}
\end{equation*}
is reducible. From uniqueness property of Lemma \ref{general}, $2^{2^{r-1}b_1+1}v^{2^{r-1}}x^{2^{r+i-1}u}$ has to be of the form $(xl(x^2))^2$ for some $l(x)\in \Z[x]$. This is possible only when $r=1$ and $2^{2^{r-1}b_1+1}v^{2^{r-1}}$ is a perfect square. Hence, $2^{b_1+1}v=c^2$ would imply $b=2\left(\frac{c}{2}\right)^2$ with $\frac{c}{2}\in \N$.  
\end{proof}

 Proof of Theorem~\ref{thm:capelli} and hence Theorem~\ref{thm:vahlen} follows from Lemma~\ref{lem:gcdge2} and \ref{lem:m|n}, \ref{lastlem}.\\ 

{\bf Acknowledgement.} We would like to thank the referee for valuable comments.

\thebibliography{99}
\bibitem{capelli}
A. Capelli, {\em Sulla riduttibilita delle equazioni algebriche}, Nota prima, Red. Accad. Fis. Mat. Soc. Napoli(3), 3(1897), 243--252.
\bibitem{selmer}
E. S. Selmer, {\em On the irreducibility on certain trinomials}, Math. Scand., 
4 (1956), 287--302.
\bibitem{karpilovsky}
G. Karpilovsky, {\em Topics in field theory}, ISBN: $0444872973$, North-Holland, 1989. 
\bibitem{vahlen}
K. Th. Vahlen, {\em \"Uber reductible Binome}, Acta Math., 19(1)(1895), 195--198.

\bibitem{redei} L. R\'edei, {\em Algebra}, Erster Teil, Akademische Verlaggesellschaft, Leipzig, 1959.
\end{document}